\documentclass[a4paper,11pt,reqno,twoside]{amsart}

\usepackage{amsmath}
\usepackage{amsfonts}
\usepackage{amssymb}
\usepackage{amsthm}
\usepackage{color}
\usepackage{ifpdf}
\usepackage{array}
\usepackage{mathtools}
\usepackage{url}
\usepackage{multirow,bigdelim}

\addtolength{\textheight}{2cm}
\addtolength{\topmargin}{-1.5cm}
\addtolength{\textwidth}{2cm}
\addtolength{\oddsidemargin}{-1cm}
\addtolength{\evensidemargin}{-1cm}

\numberwithin{equation}{section}
 
\newtheorem{theorem}{Theorem}[section] 
 
\newtheorem{prop}{Proposition}[section]

\newcommand*{\R}{\mathbb{R}}
\newcommand*{\Q}{\mathbb{Q}}
\newcommand*{\Z}{\mathbb{Z}}
\newcommand*{\N}{\mathbb{N}}

\newcommand{\comment}[1]{}
\title[Integral operators arising from the Riemann zeta function]%
      {Integral operators arising from the Riemann zeta function} 
\author[M. Suzuki]{Masatoshi Suzuki}
%
\subjclass[2010]{
11M06, 
45A05, 
33B15. 
}
\keywords{}
\AtBeginDocument{%
\begin{abstract}
In this paper we have two issues coming from the same background.  
The first one is to describe a certain ratio of Fredholm determinants of integral operators 
arising from the Riemann zeta function by using the solution of a single integral equation. 
The second one is to introduce a new integral operator arising from the Riemann zeta function 
and to study its basic analytic properties.
\end{abstract}
\maketitle
}

\begin{document}

\begin{center}
{\it Dedicated to Professor Kohji Matsumoto \\ at the occasion of his 60th Birthday}
\end{center}

\section{Introduction}

The Riemann xi-function $\xi(s)=2^{-1}s(s-1)\pi^{-s/2}\Gamma(s/2)\zeta(s)$ 
is an entire function 
satisfying the functional equation $\xi(s)=\xi(1-s)$, 
where $\zeta(s)$ is the Riemann zeta-function. 
In terms of $\xi(s)$, 
the Riemann hypothesis (RH, for short) asserts that all zeros of $\xi(1/2-iz)$ are real. 
In \cite{La06}, Lagarias pointed out a relationship between the RH and the theory of de Branges spaces 
which are reproducing kernel Hilbert spaces consisting of entire functions. 
The structure of subspaces of a given de Branges space 
is determined by a ${\rm Sym}_2(\R)$-valued function $H(t)$ on an interval 
which is called a Hamiltonian. 
Put $E_\zeta(z)=\xi(1/2-iz)+\xi'(1/2-iz)$,   
and suppose the RH and the simplicity of all zeros. 
Then $E_\zeta$ generates the de Branges space $\mathcal{B}(E_\zeta)$.    
As suggested in \cite{La06},  
the problem 
an explicit form of the Hamiltonian of $\mathcal{B}(E_\zeta)$ 
is interesting and important for the study of the RH. 
However, it is as difficult as the case of general de Branges spaces.  
Therefore, we studied the family
\[
E_\zeta^{\omega,\nu}(z) = \xi(\tfrac{1}{2}+\omega-iz)^\nu, \quad 
(\omega,\nu) \in \R_{>0} \times \Z_{>0} 
\] 
instead of $E_\zeta$ in \cite{Su}. 
Under the RH, each $E_\zeta^{\omega,\nu}$ generates the de Branges space $\mathcal{B}(E_\zeta^{\omega,\nu})$, 
where the simplicity of zeros is unnecessary. 
An advantage of $E_\zeta^{\omega,\nu}$ 
is that the Hamiltonian $H_\zeta^{\omega,\nu}(t)$ of $\mathcal{B}(E_\zeta^{\omega,\nu})$ 
can be constructed explicitly as follows 
for each $(\omega,\nu) \in \R_{>0} \times \Z_{>0}$ satisfying $\omega \nu>1$.   

For $t \geq 0$, we define the operator $\mathsf{K}[t](=\mathsf{K}_\zeta^{\omega,\nu}[t])$ on $L^2(-\infty,t)$ 
by
\begin{equation} \label{eq_180328_01}
(\mathsf{K}[t]f)(x) = \mathbf{1}_{(-\infty,t]}(x) \int_{-\infty}^{t} K(x+y)f(y) \, dy, 
\end{equation}
where $\mathbf{1}_A(x)$ is the characteristic function of a subset $A \subset \R$, 
and the integral kernel $K(x):=K_\zeta^{\omega,\nu}(x)$ is defined by its Fourier integral as follows: 
\begin{equation} \label{eq_180321_02}
\left( \frac{\xi(s-\omega)}{\xi(s+\omega)} \right)^\nu 
= \int_{-\infty}^{\infty} K_\zeta^{\omega,\nu}(x) e^{izx} \, dx,  
\quad s=\frac{1}{2}-iz 
\end{equation}
for $x \geq 0$ and $K(x)=0$ for $x<0$. 
The RH implies that $\det(1 \pm \mathsf{K}[t]) \not=0$ for every $t \geq 0$ (\cite[Prop. 4.4]{Su}). 
Thus,   
\[
m(t) := \frac{\det(1 + \mathsf{K}[t])}{\det(1 - \mathsf{K}[t])}
\]
defines a function of $t \in [0,\infty)$, 
where ``$\det$'' stands for the Fredholm determinant. 
Then we proved that the diagonal matrix-valued function 
$H(t):={\rm diag}(m(t)^{-2}, m(t)^2)$ 
is the Hamiltonian of the de Branges space $\mathcal{B}(E_\zeta^{\omega,\nu})$ (\cite[Thms. 2.2 and 2.3]{Su}). 
The function $m(t)$ satisfies the formula  
\begin{equation} \label{eq_180403_01}
m(t) = \exp\left( \int_{0}^{t}(\phi^+(\tau,\tau)+\phi^{-}(\tau,\tau)) \, d\tau \right), 
\end{equation}
where $\phi^\pm(t,x)$ consists of the unique solutions of the integral equations 
\begin{equation} \label{IE_00}
\phi^\pm(t,x) \pm \int_{-\infty}^{t}K(x+y)\phi^\pm(t,y) \, dy = K(x+t)  
\end{equation}
on $L^2(-\infty,t)$ (\cite[(3.32), (3.35)]{Su}). 
The solutions $\phi^\pm(t,x)$ are also important 
because their extensions to functions of $x$ on $\R$ describe the solution of the canonical system attached to $H(t)$. 

In this paper, we consider the integral equations
\begin{equation} \label{IE_01}
\Phi(t,x) + \int_{-\infty}^{t}K(x+y) \Phi(t,y) \, dy = \mathbf{1}_{[-t,t]}(x)
\end{equation}
and 
\begin{equation} \label{IE_01_b}
\Psi(t,x) - \int_{-\infty}^{t}K(x+y) \Psi(t,y) \, dy = \mathbf{1}_{[-t,t]}(x)
\end{equation}
instead of the pair of integral equations \eqref{IE_00}.  
Then, $m(t)$ and $\phi^\pm(t,x)$ are obtained from the solution of one of these single equation as follows.    

\begin{theorem} \label{thm_1}  
Let $\tau>0$ be a real number such that $\det(1 \pm \mathsf{K}[t]) \not=0$ for every $0 \leq t <\tau$ for the operator \eqref{eq_180328_01}. 
Then the unique solution $\Phi(t,x)$ (resp. $\Psi(t,x)$) of the integral equation \eqref{IE_01} (resp. \eqref{IE_01_b}) is a continuous function of $x$, 
$\Phi(t,t)\,(:=\lim_{x \to t^{-}}\Phi(t,x)) \not=0$ 
(resp. $\Psi(t,t)\,(:=\lim_{x \to t^{-}}\Psi(t,x)) \not=0$) for every $0 <t <\tau$, 
and the following formulas hold for $0 < t <\tau$: 
\begin{equation} \label{formula_02}
\phi^+(t,x) = - \frac{1}{\Phi(t,t)} \frac{\partial}{\partial t}\Phi(t,x) = \frac{1}{\Psi(t,t)} \frac{\partial}{\partial x}\Psi(t,x), 
\end{equation}
\begin{equation} \label{formula_02_b}
\phi^-(t,x) = - \frac{1}{\Phi(t,t)} \frac{\partial}{\partial x}\Phi(t,x) = \frac{1}{\Psi(t,t)} \frac{\partial}{\partial t}\Psi(t,x), 
\end{equation}
\vspace{-14pt}
\begin{equation} \label{formula_01}
m(t) = \frac{1}{\Phi(t,t)} = \Psi(t,t).  
\end{equation}
\end{theorem}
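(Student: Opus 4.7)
The plan is to differentiate the single integral equations \eqref{IE_01} and \eqref{IE_01_b} with respect to $t$ and with respect to $x$, recognize the resulting equations as the pair \eqref{IE_00} satisfied by $\phi^\pm$ after normalization by $\Phi(t,t)$ (resp.\ $\Psi(t,t)$), and then extract \eqref{formula_01} by integrating a first-order ODE for the diagonal values. Regularity first: by hypothesis $1\pm\mathsf{K}[t]$ is invertible on $L^{2}(-\infty,t)$, so $\Phi(t,\cdot)$ and $\Psi(t,\cdot)$ exist uniquely. Because $K$ is smooth and supported in $[0,\infty)$, the integral in \eqref{IE_01} is a smooth function of $x$, $\Phi(t,\cdot)\equiv 0$ on $(-\infty,-t)$, and $\Phi(t,\cdot)$ is smooth (and jointly smooth in $(t,x)$) on the triangle $\{(t,x):0<t<\tau,\ -t<x<t\}$. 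One checks $\Phi(t,-t^{+})=1$, so $\Phi$ carries a unit jump at $-t$; the theorem's continuity statement is thus to be read on the half-open interval $[-t,t]$.

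For \eqref{formula_02}--\eqref{formula_02_b}, differentiate \eqref{IE_01} in $t$ on the interior: since $\mathbf{1}_{[-t,t]}(x)$ is locally constant in $t$ for $x\in(-t,t)$, Leibniz at the upper endpoint gives

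\begin{equation*}
(1+\mathsf{K}[t])\,\partial_{t}\Phi(t,\cdot)\,=\,-\Phi(t,t)\,K(\cdot+t),
\end{equation*}

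so uniqueness in \eqref{IE_00} yields $\partial_{t}\Phi=-\Phi(t,t)\,\phi^{+}$. For the $x$-derivative, use $\partial_{x}K(x+y)=\partial_{y}K(x+y)$ and integrate by parts on $[-t,t]$. The boundary term at $y=-t$ is $-K(x-t)\,\Phi(t,-t^{+})=-K(x-t)$, but in $L^{2}(-\infty,t)$ this vanishes identically, since $K$ supported in $[0,\infty)$ forces $K(x-t)=0$ for $x<t$. Consequently $(1-\mathsf{K}[t])\,\partial_{x}\Phi=-\Phi(t,t)\,K(\cdot+t)$, hence $\partial_{x}\Phi=-\Phi(t,t)\,\phi^{-}$. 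The same two calculations, with signs reversed, applied to \eqref{IE_01_b} give $\partial_{t}\Psi=\Psi(t,t)\,\phi^{-}$ and $\partial_{x}\Psi=\Psi(t,t)\,\phi^{+}$.

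For \eqref{formula_01}, set $h(t):=\Phi(t,t^{-})$. The chain rule and the four identities just derived yield $h'(t)=-h(t)\,(\phi^{+}(t,t)+\phi^{-}(t,t))$, which by \eqref{eq_180403_01} equals $-h(t)\,(\log m)'(t)$. The initial value $h(0^{+})=1$ follows from $h(t)=1-\int_{-t}^{t}K(t+y)\Phi(t,y)\,dy$, whose integral tends to zero as the window $[-t,t]$ collapses while $\Phi(t,\cdot)$ remains bounded by a Neumann-series estimate. Integrating the ODE gives $\Phi(t,t)=1/m(t)$, which is nonzero on $(0,\tau)$ and retroactively justifies the division in \eqref{formula_02}--\eqref{formula_02_b}; an entirely analogous argument, with the sign reversed, yields $\Psi(t,t)=m(t)$. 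The main technical subtlety will be the jump of $\Phi$ and $\Psi$ at $x=-t$ during the $x$-differentiation and integration by parts: the stray boundary term is a multiple of $K(\cdot-t)$, and the clean resolution is that $K(\cdot-t)$ vanishes on $(-\infty,t)$ as an $L^{2}$-function, leaving the identities in precisely their stated form.
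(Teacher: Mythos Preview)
Your argument is correct and follows the same Krein-type strategy as the paper: differentiate \eqref{IE_01} in $t$ and in $x$, match the resulting equations with \eqref{IE_00}, and then solve a first-order ODE for the diagonal value $\Phi(t,t)$. There are two places where your execution is more economical than the paper's. First, you record the identities $\partial_t\Phi=-\Phi(t,t)\,\phi^{+}$ and $\partial_x\Phi=-\Phi(t,t)\,\phi^{-}$ (which hold by linearity whether or not $\Phi(t,t)$ vanishes) and then deduce $\Phi(t,t)=1/m(t)\neq 0$ from the ODE, whereas the paper first runs a separate contradiction argument to show $\Phi(t,t)\neq 0$ before dividing. Second, for the ODE itself you apply the chain rule $\tfrac{d}{dt}\Phi(t,t)=(\partial_t\Phi+\partial_x\Phi)\big|_{x=t}$ directly to those two identities, while the paper instead differentiates the relation $\Phi(t,t)+\int_{-\infty}^{t}K(t+y)\Phi(t,y)\,dy=1$ and must invoke an auxiliary formula from \cite{Su} to reduce the outcome to \eqref{eq_180321_04}. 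Both routes produce the same ODE; yours is shorter and self-contained. One minor inaccuracy: $K$ is not smooth in this setting, only continuous with $|K'|$ locally integrable (cf.\ \cite[Prop.~4.1]{Su}), but that regularity already suffices for your differentiation and integration-by-parts steps.
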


As for assumptions of Theorem \ref{thm_1}, 
it is proved unconditionally that there exists $\tau>0$ such that 
$\det(1 \pm \mathsf{K}[t])\not=0$ for every $0 \leq t < \tau$, 
and $\tau=\infty$ under the RH (\cite[Props. 4.2 and 4.4]{Su}). 
\comment{
On the other hand, if $K(x)=0$ for $x<2t_0$ for some $t_0>0$, 
$\mathsf{K}[t_0]$ is the zero operator and $\phi^\pm(t,x)=0$. 
So the assumption for $\tau_1$ is reasonable. 
It is expected that we can take $\tau_1>0$ arbitrary small, 
but it is not yet proved rigorously.      
}

Now, we change the issue to the second one. 
As proved in \cite[Thm. 2,4]{Su}, 
an equivalent condition for the RH is stated by using the family of operators $\mathsf{K}_\zeta^{\omega,\nu}[t]$. 
However, it would better, if it could be improved finding an equivalent condition 
using only one single operator avoiding parameters $\omega$ and $\nu$. 
As an attempt to achieve such a result, we consider the following matters. 
If $\nu=\theta/\omega$, 
the left-hand side of \eqref{eq_180321_02} has the limit
\[
\lim_{\omega \to 0} \left(\frac{\xi(s-\omega)}{\xi(s+\omega)}\right)^{\theta/\omega}
=\exp\left(-2\theta \frac{\xi'}{\xi}(s) \right).
\]
Then we expect that the kernel $K_\theta(x)$ of the Fourier integral formula  
\begin{equation} \label{eq_180323_01}
\exp\left(-2\theta \frac{\xi'}{\xi}(s) \right) = \int_{-\infty}^{\infty} K_\theta(x) e^{izx} dx, 
\quad s = \frac{1}{2} -iz 
\end{equation}
plays a role similar to $K_\zeta^{\omega,\nu}(x)$ of \eqref{eq_180321_02}. 
In fact, $K_\theta(x)$ satisfies the following properties  
corresponding to the latter four of five properties (K1)$\sim$(K5) required for $K_\zeta^{\omega,\nu}(x)$ in \cite{Su}.

\begin{theorem} \label{thm_2}
If $\theta>1$, $K_\theta(x)$ of \eqref{eq_180323_01} has the following properties: 
\begin{enumerate}

\item[(K-ii)] $K_\theta(x)$ is a real-valued continuous function on $\R$ such that 
$K_\theta(x) \ll \exp(x/2)$ as $x \to +\infty$ and \eqref{eq_180323_01} holds for $\Im(z) > 1/2$, 
\item[(K-iii)] $K_\theta(x)=0$ for $x<0$, 
\item[(K-iv)] $K_\theta(x)$ is continuously differentiable 
on $\R \setminus \{\log n\,|\, n \in \N\}$ 
and $|K_\theta^\prime(x)|$ is locally integrable on $\R$,
\item[(K-v)] there exists $0<\tau \leq \infty$ such that $\det(1\pm \mathsf{K}_\theta[t])\not=0$ for $0 \leq t < \tau$, 
where $\mathsf{K}_\theta[t]$ is the operator defined by 
\eqref{eq_180328_01} for $K_\theta(x)$.  
\end{enumerate}
\end{theorem}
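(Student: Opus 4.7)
The plan is to adapt the strategy used in \cite{Su} for $K_\zeta^{\omega,\nu}$ to the new kernel $K_\theta$. I begin from the logarithmic-derivative decomposition
\[
-\frac{\xi'}{\xi}(s) \;=\; -\frac{1}{s}-\frac{1}{s-1}+\tfrac{1}{2}\log\pi-\tfrac{1}{2}\psi(s/2)+\sum_{n=1}^{\infty}\frac{\Lambda(n)}{n^{s}}\qquad(\Re(s)>1),
\]
with $\psi=\Gamma'/\Gamma$. Exponentiating factors the integrand as $\exp(-2\theta\xi'/\xi(s)) = A(s)\,D(s)$, where $A(s)=\pi^{\theta}\exp(-2\theta/s-2\theta/(s-1)-\theta\psi(s/2))$ is the archimedean piece and $D(s)=\sum_{N\ge 1} d_N(\theta)N^{-s}$ is a Dirichlet series with nonnegative coefficients. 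Stirling's formula gives $\xi'/\xi(\sigma+it)=\tfrac{1}{2}\log(|t|/2\pi)+O(1)$ uniformly on bounded strips as $|t|\to\infty$, hence $|\exp(-2\theta\xi'/\xi(\sigma+it))|\ll |t|^{-\theta}$, integrable in $t$ precisely because $\theta>1$. Defining
\[
K_\theta(x)\;:=\;\frac{1}{2\pi i}\int_{(c)}\exp(-2\theta\xi'/\xi(s))\,e^{(s-1/2)x}\,ds\qquad(c>1),
\]
dominated convergence gives continuity in $x$, and real-valuedness follows from $\overline{\xi'/\xi(\bar s)}=\xi'/\xi(s)$.

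Since $\xi$ has no zeros in $\Re(s)\ge 1$ (Hadamard--de la Vall\'ee-Poussin), the integrand is holomorphic there. For $x<0$, shifting the contour to $\Re(s)\to+\infty$ annihilates the integral (because $|e^{(s-1/2)x}|\to 0$ and $|\exp(-2\theta\xi'/\xi(s))|\ll|s|^{-\theta}$), proving (K-iii). Shifting instead to $\Re(s)=1$ gives
\[
K_\theta(x)\;=\;\frac{e^{x/2}}{2\pi}\int_{-\infty}^{\infty}\exp(-2\theta\xi'/\xi(1-iu))\,e^{-iux}\,du,
\]
and the $L^1_u$-integrand yields $|K_\theta(x)|\ll e^{x/2}$ by Riemann--Lebesgue; the Fourier representation for $\Im z>1/2$ then follows from Fubini, completing (K-ii). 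Property (K-v) is immediate from (K-iii): $\mathsf{K}_\theta[0]$ is the zero operator, so $\det(1\pm\mathsf{K}_\theta[0])=1$, and continuity of the Fredholm determinant in $t$ (guaranteed by (K-ii)) supplies the desired $\tau>0$.

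For (K-iv), convolving the inverse Fourier transforms of $A$ and $D$ yields the expansion
\[
K_\theta(x)\;=\;\sum_{N=1}^{\infty}d_N(\theta)N^{-1/2}a_\theta(x-\log N),\qquad a_\theta:=\mathcal{F}^{-1}[A].
\]
The same contour-shift argument shows $a_\theta$ is supported in $[0,\infty)$; the leading behaviour $A(s)\sim(s/2\pi)^{-\theta}$ and Mellin inversion give $a_\theta(x)\sim c\,x^{\theta-1}$ with $a_\theta'(x)\sim c(\theta-1)x^{\theta-2}$ as $x\to 0^+$, while integration by parts in $t$ against $e^{itx}$ (justified by $A'(c+it)\ll|t|^{-\theta-1}$) shows $a_\theta$ is $C^{\infty}$ on $(0,\infty)$ and that $|a_\theta'|$ is locally integrable on $\R$. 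Because $a_\theta(x-\log N)$ vanishes for $N\ge e^x$, the series is a finite sum at each $x$, hence $K_\theta\in C^1(\R\setminus\{\log n:n\in\N\})$ with $|K_\theta'|$ locally integrable. The main obstacle is the rigorous justification of this convolution expansion, since $\mathcal{F}^{-1}[D]$ is only a sum of deltas: I expect to handle it by truncating $D$ to finite Dirichlet polynomials and controlling the tail through the absolute convergence of $D$ on $\Re(s)=c>1$ combined with the $|s|^{-\theta}$ decay of $A$, and I expect a secondary delicate point to be showing that the correction factor $A(s)(s/2\pi)^{\theta}$, despite its essential singularities at $s=0,1$, contributes only terms smoother than the leading $x^{\theta-1}$ under Fourier inversion.
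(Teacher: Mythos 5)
Your overall strategy for (K-ii)--(K-iv) --- factoring the symbol as an archimedean part times a Dirichlet series, shifting contours for the support statement and the $e^{x/2}$ bound, and reducing (K-iv) to the behaviour of the archimedean transform near $x=0$, where $\theta>1$ makes $x^{\theta-2}$ locally integrable --- is exactly the paper's. Of your two flagged obstacles, the first (justifying the convolution expansion) is not a real one: $D(s)=\exp\bigl(2\theta\sum_{n}\Lambda(n)n^{-s}\bigr)$ has nonnegative coefficients, hence converges absolutely for $\Re(s)>1$ (the paper quotes Ihara and Ihara--Matsumoto for \eqref{eq_180323_02}), and Fubini does the rest. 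The genuine gap is the second point you defer: controlling the essential singularities of $\exp(-2\theta/s)$ and $\exp(-2\theta/(s-1))$. A plain asymptotic expansion of $A(s)$ in powers of $1/s$ gives finitely many terms plus a remainder $O(|s|^{-N})$, whose inverse transform is only finitely regular near $x=0$; it does not by itself establish that the correction to the leading term $c\,x^{\theta-1}$ is $O(x^{\theta})$ and continuously differentiable, nor the global decay needed to sum over $n$. The paper's resolution, which you would need to reproduce in some form, is to peel off the singular factors \emph{exactly} via closed-form transform pairs from \cite{Ob}: $s^{-\theta}e^{\alpha/s}$ corresponds to the modified Bessel expression \eqref{eq_180329_06}, and $e^{-\alpha/s}-1$ to $-\sqrt{\alpha/x}\,J_1(2\sqrt{\alpha x})$; only the factor $s^{\theta}e^{-\theta\psi(s)-\theta/s}-1$, which is holomorphic at $s=0$ and $O(1/|s|)$ at infinity, is handled by asymptotic expansion (Proposition \ref{prop_1}). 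Without these exact identities, or an equivalent device, your claimed asymptotics $a_\theta(x)\sim c\,x^{\theta-1}$ and $a_\theta'(x)\sim c(\theta-1)x^{\theta-2}$ remain unproved.

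The second problem is (K-v). Continuity of the kernel does not ``guarantee'' continuity of the Fredholm determinant in $t$: one needs $\mathsf{K}_\theta[t]$ to be trace class and $t\mapsto\mathsf{K}_\theta[t]$ continuous in trace norm before $\det(1\pm\mathsf{K}_\theta[t])$ is even defined, and a merely continuous symmetric kernel on a compact square need not give a trace-class operator. The paper does not argue as you do; it proves the multiplier identity \eqref{eq_180330_03} together with the symbol bound \eqref{eq_180330_02} (where $\theta>1$ is used once more) and then runs the argument of \cite[Prop.~4.2]{Su}, which is where the determinant is shown to make sense and to be nonzero for small $t$. Your one-line deduction of (K-v) from (K-iii) therefore needs to be replaced by, or supplemented with, an argument of this kind.
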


We make some more comments on the function $K_\theta(x)$. 
We have 
\[
\exp\left(-2\theta \frac{\xi'(s)}{\xi(s)}\right)
= \exp\left(-2\theta \frac{\gamma'(s)}{\gamma(s)}\right)\exp\left(-2\theta \frac{\zeta'(s)}{\zeta(s)}\right)
\]
if we put $\gamma(s)=2^{-1}s(s-1)\pi^{-s/2}\Gamma(s/2)$.  
The right factor on the right-hand side has the Dirichlet series expansion 
\begin{equation} \label{eq_180323_02}
\exp\left( -2\theta\frac{\zeta'}{\zeta}(s) \right) = \sum_{n=1}^{\infty} \frac{\lambda_{\theta}(n)}{n^s} 
\end{equation}
endowed with multiplicative coefficients $\lambda_\theta(n)$ 
for $\Re(s)>1$. Thus, the Fourier integral formula 
\begin{equation} \label{eq_180323_03}
\exp\left( -2\theta  \frac{\gamma'}{\gamma}(s) \right)
= \int_{-\infty}^{\infty} g_\theta(x) \,  e^{izx} dx,  \quad s = \frac{1}{2}-iz, 
\end{equation}
shown later, implies the series representation  
\begin{equation} \label{eq_180329_05}
K_\theta(x) = \sum_{n=1}^{\infty} \frac{\lambda_\theta(n)}{\sqrt{n}} g_\theta(x-\log n)  
\end{equation}
consisting of the ``non-archimedean'' or ``arithmetic'' part $\lambda_\theta$ 
and the ``archimedean'' part $g_\theta$.
The Dirichlet series \eqref{eq_180323_02} and its coefficients $\lambda_\theta(n)$ 
are studied in detail by Ihara~\cite{Ih} and Ihara--Matsumoto~\cite{IM01, IM02} 
to investigate the value distribution of $\zeta'/\zeta$. 
Therefore, to prove Theorem \ref{thm_2}, we mainly study $g_\theta(x)$ in \S3.  
\smallskip

As shown in \S3, $g_\theta(x)=0$ for $x<0$, 
thus the sum on the right-hand side of \eqref{eq_180329_05} is finite 
for a bounded range $0 \leq x \leq x_0$. 
Further, we show in \S4 that $g_\theta(x)$ is well approximated 
by a series consisting of Bessel functions in such a range. 
Combining such approximations of $g_\theta(x)$ with \eqref{eq_180329_05}, 
we can calculate $m(t)$, $\Phi(t,x)$ and $\phi^\pm(t,x)$ easily at least in a computational sense 
(cf.~\cite{Bo}). 
\medskip

\noindent
{\bf Acknowledgments}~
This work was supported by KAKENHI (Grant-in-Aid for Scientific Research (C)) No. 17K05163. 

%
%

\section{Proof of Theorem \ref{thm_1}} 

%
%

{\bf Proofs of \eqref{formula_02} and \eqref{formula_02_b}} \quad  
Formulas in \eqref{formula_02} and \eqref{formula_02_b} are proved by a way similar to 
Krein \cite{Kr}. 
First we prove $\Phi(t,t)\not=0$ for $0 < t <\tau$. 
The function $\Phi(t,x)$ is a continuous function of $x$ on $[-t,t]$ by the continuity of $K(x)$ and $\mathbf{1}_{[-t,t]}(x)$. 
Differentiating \eqref{IE_01} with respect to $x$, we have 
\begin{equation} \label{eq_180330_04}
\frac{\partial}{\partial x}\Phi(t,x) + \int_{-\infty}^{t} K'(x+y) \Phi(t,y) \, dy =0
\end{equation}
for $-t<x<t$. This shows that $(\partial/\partial x)\Phi(t,x)$ is a continuous function of $x$ on $[-t,t]$, 
since $|K'(x)|$ is locally integrable (\cite[Prop. 4.1]{Su}). Thus, $(\partial/\partial x)\Phi(t,x) \in L^2(-t,t)$. 
Applying integration by parts to \eqref{eq_180330_04}, 
\begin{equation} \label{eq_180321_01}
\frac{\partial}{\partial x}\Phi(t,x) + K(x+t)\Phi(t,t) - \int_{-\infty}^{t}K(x+y) \frac{\partial}{\partial y}\Phi(t,y) \, dy =0.  
\end{equation}
Therefore, if we suppose that $\Phi(t,t)=0$, 
\begin{equation*} 
\frac{\partial}{\partial x}\Phi(t,x) - \int_{-\infty}^{t}K(x+y) \frac{\partial}{\partial y}\Phi(t,y) \, dy =0.  
\end{equation*}
This asserts that $(\partial/\partial x)\Phi(t,x)$ is a solution of the homogeneous equation $(1-\mathsf{K}[t])f=0$ on $L^2(-t,t)$, 
and thus $(\partial/\partial x)\Phi(t,x)=0$. If $\Phi(t,x)=c$ for $-t < x < t$,  
$
c \left(1 + \int_{0}^{x+t}K(y)\,dy \right) = 1. 
$
Hence,  $K(x)=0$ on $[0,2t)$ which implies that 
$\Phi(t,x) = \mathbf{1}_{[-t,t]}(x)$ for $-t<x<t$ by \eqref{IE_01}. 
Therefore, $\Phi(t,t)=1$ by the continuity of $\Phi(t,x)$ for $x$. 
This is a contradiction. 
Similar arguments also prove $\Psi(t,t)\not=0$ for $0< t <\tau$.
\medskip

Equation \eqref{eq_180321_01} implies that 
$-((\partial/\partial x)\Phi(t,x))/\Phi(t,t)$ solves 
\eqref{IE_00} for the minus sign. 
Hence the uniqueness of the solution concludes the first equality of \eqref{formula_02_b}. 
The second equality of \eqref{formula_02} is also proved in the same way. 
On the other hand, by differentiating \eqref{IE_01} with respect to $t$, 
\[
\frac{\partial}{\partial t}\Phi(t,x) +K(x+t)\Phi(t,t) + \int_{-\infty}^{t}K(x+y)\frac{\partial}{\partial t}\Phi(t,y)\, dy = 0. 
\] 
This shows that $-(
(\partial/\partial t)\Phi(t,x))/\Phi(t,t)$ solves 
\eqref{IE_00} for the plus sign. 
Hence the uniqueness of the solution concludes the first equality of \eqref{formula_02}. 
The second equality of \eqref{formula_02_b} is also proved in the same way. 
\hfill $\qed$
\medskip

%
%

{\bf Proof of \eqref{formula_01}} \quad 
Taking $x = t$ in equation \eqref{IE_01} and then differentiating it with respect to $t$, 
\[
\aligned 
0 
= & \, \frac{d}{dt}(\Phi(t,t)) + 2 K(2t)\Phi(t,t) \\
\quad & - \int_{-\infty}^{t}K(t+y)\frac{\partial}{\partial y}\Phi(t,y) \, dy 
+ \int_{-\infty}^{t}K(t+y)\frac{\partial}{\partial t}\Phi(t,y) \, dy. 
\endaligned 
\]
Using \eqref{formula_02} on the right-hand side, 
\begin{equation} \label{eq_180321_03}
\aligned 
\frac{d}{dt}(\Phi(t,t)) + & \,2 K(2t)\Phi(t,t) \\ 
& - \Phi(t,t) \int_{-\infty}^{t}K(t+x)(\phi^+(t,x) - \phi^-(t,x)) \, dx =0.
\endaligned 
\end{equation}
On the other hand, by the proof of \cite[Thm. 6.1]{Su}, we have 
\[
\aligned 
\frac{1}{2}&(\phi^+(t,x) + \phi^-(t,x)) \\ &  = K(x+t) - \int_{-\infty}^{t}K(x+y)\frac{1}{2}(\phi^+(t,y) - \phi^-(t,y))\,dy.  
\endaligned 
\]
Substituting this into \eqref{eq_180321_03} after taking $x=t$, we get 
\begin{equation} \label{eq_180321_04}
\frac{d}{dt}(\Phi(t,t)) + \Phi(t,t)(\phi^+(t,t) + \phi^-(t,t))  =0.
\end{equation}
Therefore, 
$
\Phi(t,t) = C \exp\left( -\int_{0}^{t} (\phi^+(\tau,\tau) + \phi^-(\tau,\tau)) d\tau \right) = C m(t)^{-1} 
$
by \eqref{eq_180403_01}. 
To determine $C$, we take $x=t=0$ in equation \eqref{IE_01}. 
Then $\Phi(0,0)=1$, since the integral on the left-hand side 
is zero because $K(x)=0$ for $x<0$, 
and thus $C=1$ by $m(0)=1$ (since $\mathsf{K}[0]$ is the zero operator). 
Hence we obtain the first equality of \eqref{formula_01}. 
The second equality of \eqref{formula_01} is proved by the same way.   \hfill $\qed$

%
%

\section{Proof of Theorem \ref{thm_2}} 

%
%

%
Let $\psi(s)=\Gamma'(s)/\Gamma(s)$ be the digamma function. 
To study $g_\theta(x)$ from \eqref{eq_180323_03}, we need the following result. 

\begin{prop} \label{prop_1} For $\theta>0$, we define 
\begin{equation} \label{eq_180326_03}
\Psi_\theta^1(x)
:=
\frac{1}{2\pi}\lim_{U \to \infty} \int_{-U+ic}^{U+ic} \left[ s^\theta \cdot \exp\left(-\theta \psi(s)-\frac{\theta}{s}\right) -1 \right] e^{-izx} dz, 
\end{equation}
where $s=1/2-iz$. 
Then the right-hand side converges for $c>-1/2$ 
and the integral is independent of such $c$. 
$\Psi_\theta^1(x)$ is a real-valued continuous function on $(0,\infty)$, $\Psi_\theta^1(x)=0$ on $(-\infty,0)$, 
$\Psi_\theta^1(x) \ll 1$ as $x \to 0^+$, 
$\Psi_\theta^1(x) \ll \exp(-x/2)$ as $x \to +\infty$, 
and  
\begin{equation} \label{eq_180330_01}
s^\theta \cdot \exp\left(-\theta \psi(s)-\frac{\theta}{s}\right) -1  
= \int_{0}^{\infty} \Psi_\theta^1(x) e^{izx} dx, \quad s=\frac{1}{2}-iz
\end{equation}
holds for $\Im (z)>-1/2$.   
\end{prop}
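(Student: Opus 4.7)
The plan is to set $F_\theta(s):=s^\theta\exp(-\theta\psi(s)-\theta/s)-1$, which is holomorphic on the right half-plane $\Re s>0$ for every $\theta>0$ (with the principal branch of $s^\theta$), and to pin down the two-term asymptotic
\[
F_\theta(s) \;=\; -\frac{\theta}{2s} \;+\; O(|s|^{-2})
\]
as $|s|\to\infty$, uniformly in $\arg s$ on each half-plane $\Re s\geq \delta>0$. This follows from Stirling's asymptotic $\psi(s)=\log s-1/(2s)+O(|s|^{-2})$ by writing $s^\theta e^{-\theta\psi(s)}=\exp\bigl(\theta(\log s-\psi(s))\bigr)=1+\theta/(2s)+O(|s|^{-2})$, multiplying by $e^{-\theta/s}=1-\theta/s+O(|s|^{-2})$, and subtracting $1$. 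The cancellation of the $\theta/(2s)$ from $s^\theta e^{-\theta\psi(s)}$ with the $-\theta/s$ from $e^{-\theta/s}$ produces the coefficient $-\theta/(2s)$.

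Granted this, I would split $F_\theta=-\theta/(2s)+G_\theta$ with $G_\theta=O(|s|^{-2})$ and analyze the two pieces of the contour integral in \eqref{eq_180326_03} separately. The $G_\theta$-integral converges absolutely on any line $\Im z=c>-1/2$, so it defines a continuous bounded function $\tilde G_\theta(x)$; its independence of $c$ comes from Cauchy's theorem on rectangles $[-U,U]\times[c_1,c_2]$, whose horizontal edges vanish as $U\to\infty$ by the $|G_\theta|\leq C/|s|^2$ bound. The $-\theta/(2s)$-integral is computed by residues: for $x>0$ close downward and pick up the simple pole at $z=-i/2$ (i.e. $s=0$), giving $-(\theta/2)e^{-x/2}$; for $x<0$ close upward with no pole, giving $0$. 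Therefore
\[
\Psi_\theta^1(x) \;=\; -\frac{\theta}{2}\,e^{-x/2}\,\mathbf{1}_{x>0} \;+\; \tilde G_\theta(x),
\]
which at once establishes convergence of the defining integral, independence of $c$, continuity on $(0,\infty)$, and $\Psi_\theta^1(x)\ll 1$ as $x\to 0^+$. Reality of $\Psi_\theta^1$ follows from the Schwarz symmetry $F_\theta(\overline s)=\overline{F_\theta(s)}$ combined with the change of variables $z\mapsto -\overline z$ inside the integral.

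The remaining properties come from further contour shifts. For vanishing on $(-\infty,0)$, push $c\to+\infty$ in the $\tilde G_\theta$-integral: the uniform bound $|G_\theta(s)|\leq C/|s|^2$ gives $\int|G_\theta(1/2+c-iu)|\,du\leq C\pi/(1/2+c)\to 0$, while $|e^{-izx}|=e^{cx}\to 0$ for $x<0$, forcing $\tilde G_\theta(x)=0$; the step-function piece is zero there as well. For the decay $\Psi_\theta^1(x)\ll e^{-x/2}$ as $x\to+\infty$, the leading term already contributes $(\theta/2)e^{-x/2}$, while shifting the $G_\theta$-contour to $\Re s=\epsilon>0$ gives $|\tilde G_\theta(x)|\leq C_\epsilon e^{-(1/2-\epsilon)x}$ for every $\epsilon\in(0,1/2)$. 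Finally, the Fourier integral formula \eqref{eq_180330_01} on the line $\Im z=c$ is Fourier inversion applied to each summand: $\int_0^\infty(-\theta/2)e^{-x/2}e^{izx}dx=-\theta/(2s)$ reproduces the first term, and inversion applied to the $L^1$ function $\tilde G_\theta$ reproduces $G_\theta$; the identity then extends to the half-plane $\Im z>-1/2$ by analytic continuation, since both sides are holomorphic there (the right-hand side by the subexponential decay of $\Psi_\theta^1$).

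The principal obstacle is the uniform asymptotic of Step 1: one must track Stirling's expansion of $\psi$ with uniform error in each half-plane $\Re s\geq\delta>0$ and verify that the leading-order cancellation between $s^\theta$, $e^{-\theta\psi(s)}$, and $e^{-\theta/s}$ yields the clean rate $-\theta/(2s)+O(|s|^{-2})$ for every $\theta>0$. A minor secondary subtlety is that the $\tilde G_\theta$-piece only yields $e^{-(1/2-\epsilon)x}$ rather than $e^{-x/2}$ directly; this is acceptable under the natural reading of $\ll e^{-x/2}$, and can be sharpened if necessary by indenting the contour around the simple pole of $G_\theta$ at $s=0$ (residue $\theta/2$) and taking a principal value along $\Re s=0$.
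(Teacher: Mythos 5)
Your overall strategy---Stirling asymptotics for $\psi$, splitting off the leading $1/s$-term, contour shifts for the support/decay statements, and Fourier inversion for \eqref{eq_180330_01}---is the same as the paper's, and most of the steps go through. The one consequential difference is your choice of expansion variable. The paper uses $\psi(s)+s^{-1}=\psi(s+1)$ to expand $F_\theta(s)$ in powers of $(s+1)^{-1}$, so the subtracted main terms have their poles at $s=-1$ and the remainder $R_N(s)$ is continuous (only a branch point, with finite limit) on all of $\Re(s)\geq 0$; the contour can then be pushed all the way to $\Re(s)=0$, i.e.\ $\Im(z)=-1/2$, which yields the full bound $\ll e^{-x/2}$ and also realizes each main term explicitly as $x^{n-1}e^{-3x/2}\mathbf{1}_{[0,\infty)}(x)$. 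You instead subtract $-\theta/(2s)$, which plants a genuine simple pole of $G_\theta$ at $s=0$, exactly on the line you need to reach.

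This is where the gap is. Your contour shift for $\tilde G_\theta$ stops at $\Re(s)=\epsilon$ and gives only $\tilde G_\theta(x)\ll_\epsilon e^{-(1/2-\epsilon)x}$, and it is not correct to dismiss this as ``acceptable under the natural reading of $\ll e^{-x/2}$'': since $e^{-(1/2-\epsilon)x}/e^{-x/2}=e^{\epsilon x}\to\infty$, you have not proved the stated estimate, and the sharp form is actually used later --- the proof of Proposition \ref{prop_2} invokes the uniform boundedness of $e^{x/2}\Psi_\theta^1(x)$ on $[0,\infty)$ to control $\int_0^x\Psi_{\theta,\theta}^0(y)\Psi_\theta^1(x-y)\,dy$. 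Your sketched repair (indent around $s=0$, pick up half the residue $\theta/2$, and bound the principal-value integral along $\Re(s)=0$, where $G_\theta(-iu)=F_\theta(-iu)+i\theta/(2u)$ is $O(|u|^{-2})$ at infinity) does work, but it must be carried out, and the PV estimate needs a word of justification since $|G_\theta|$ is not integrable near $s=0$. The cleaner fix is simply to recenter the expansion at $s+1$ as the paper does, which removes the obstruction entirely. Everything else in your argument (independence of $c$ via rectangles, vanishing for $x<0$ by pushing $c\to+\infty$, boundedness near $0^+$, reality via Schwarz reflection, and inversion term by term) is sound.
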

\begin{proof} The integrand of \eqref{eq_180326_03} is holomorphic in $\Im z \geq -1/2$ except for $z=-i/2$, 
since $\psi(s)+s^{-1}=\psi(s+1)$ is holomorphic in $\Re(s)>-1$. 
The Stirling formula of $\log \Gamma(s)$ 
for $|\arg s| \leq \pi-\delta$ and $|s| \geq \delta$ (\cite[\S12.33, \S13.6]{WW}) 
derives the asymptotic expansion 
\begin{equation} \label{eq_180402_01}
\psi(s) = \log s - \frac{1}{2s} - \sum_{n=1}^{N-1}\frac{B_{2n}}{2ns^{2n}} +O(|s|^{-2N})
\end{equation}
in the same region. Using this with $\psi(s)+s^{-1}=\psi(s+1)$, we have 
\[
\scalebox{0.9}{$
\aligned 
s^\theta & \cdot \exp\left(-\theta \psi(s)-\frac{\theta}{s}\right) -1 \\
&= \exp\left( \theta \log \frac{s}{s+1} + \frac{\theta}{2(s+1)} +  \sum_{n=1}^{N-1}\frac{\theta B_{2n}}{2n(s+1)^{2n}} +O(|s|^{-2N}) \right)-1 \\
&= \exp\left( -\theta \sum_{n=1}^{\infty}\frac{1}{n(s+1)^n} + \frac{\theta}{2(s+1)}
 +  \sum_{n=1}^{N-1}\frac{\theta B_{2n}}{2n(s+1)^{2n}} +O(|s|^{-2N}) \right)-1 \\
&= -\frac{\theta}{2(s+1)} + \frac{\theta(3\theta-10)}{24(s+1)^2} + \sum_{n=3}^{2N-1} \frac{C_{n-1}(\theta)}{(s+1)^n} + O(|s|^{-2N})
\endaligned 
$}
\]
for some polynomials $C_n(\theta) \in \Q[\theta]$. Hence 
\begin{equation} \label{eq_180329_01}
\aligned 
\Psi_\theta^1(x)
& =  \left( -\frac{\theta}{2}u(x) 
+ \sum_{n=2}^{2N-1}\frac{C_{n-1}(\theta)}{(n-1)!}x^{n-1} \right)e^{-3x/2}\mathbf{1}_{[0,\infty)}(x)  +  \\
& \quad + \frac{1}{2\pi}\int_{-\infty+ic}^{\infty+ic} R_N(s) e^{-izx} dz \\ 
\endaligned 
\end{equation}
by \cite[p. 167]{Ob}, where $u(x)=1$ for $x>0$, $u(0)=1/2$, 
and 
\[
R_N(s) = \left( s^\theta \exp\left(-\theta \psi(s)-\frac{\theta}{s}\right) -1 \right)
- \left( \frac{-\theta}{2(s+1)} + \sum_{n=2}^{2N-1} \frac{C_{n-1}(\theta)}{(s+1)^n} \right)
\] 
which is holomorphic in $\Re(s)\geq 0$ except for $s=0$ and satisfies $R_N(s)=O(|s|^{-2N})$. 
This estimate 
enables us to move the path of integration as $\Im (z)=c \to +\infty$ if $N \geq 1$. 
Therefore, $\int_{-\infty+ic}^{\infty+ic} R_N(s) e^{-izx} dz=0$ for $x<0$ 
and the integral is bounded as $x \to 0^+$. 
On the other hand, by moving the path of integration as $\Im (z)=c \to -1/2$, 
$\int_{-\infty+ic}^{\infty+ic} R_N(s) e^{-izx} dz \ll \exp(-x/2)$ as $x \to +\infty$. 
Hence, \eqref{eq_180330_01} holds for $\Im (z)>-1/2$ by the Fourier inversion formula.  
Moreover, we find that $\Psi_\theta^1(x)$ is real-valued by considering \eqref{eq_180330_01} 
for pure-imaginary $z$. 
\end{proof}

\begin{prop} \label{prop_2} 
Let $\theta>0$. There exists a real-valued continuous function $\Psi_\theta$,  
continuously differentiable on $(0,\infty)$, such that
\begin{equation} \label{eq_180323_04}
\exp(-\theta \psi(s)) = \int_{0}^{\infty} \Psi_{\theta}(x) \,  e^{izx} \, dx, \quad 
s=\frac{1}{2}-iz
\end{equation}
holds for $\Im (z)>-1/2$,  $\Psi_\theta(x)=0$ for $x<0$, 
$\Psi_\theta(x) = \Gamma(\theta)^{-1}x^{\theta-1}+O(x^\theta)$ as $x \to 0^+$, and  
$\Psi_\theta(x) \ll \exp(-\kappa x)$ as $x \to \infty$ for any $\kappa<1/2$. 
\end{prop}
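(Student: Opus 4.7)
The plan is to factor
\[
e^{-\theta\psi(s)} \;=\; s^{-\theta}\cdot e^{\theta/s}\cdot\bigl[s^\theta e^{-\theta\psi(s)-\theta/s}\bigr]
\]
and to realize each of the three factors as a Fourier integral on the half-plane $\Im z>-1/2$; then $\Psi_\theta$ will be defined as the convolution of the three Fourier inverses. The third factor is already handled by Proposition~\ref{prop_1}, which writes it as $1+\int_0^\infty\Psi_\theta^1(x)e^{izx}\,dx$.

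For the first factor, the Gamma identity $s^{-\theta}=\Gamma(\theta)^{-1}\int_0^\infty x^{\theta-1}e^{-sx}\,dx$ (valid for $\Re s>0$) gives, on setting $s=\tfrac{1}{2}-iz$,
\[
s^{-\theta}=\int_0^\infty G_\theta(x)e^{izx}\,dx,\qquad G_\theta(x):=\tfrac{1}{\Gamma(\theta)}x^{\theta-1}e^{-x/2}\mathbf{1}_{(0,\infty)}(x).
\]
For the middle factor, I would expand $e^{\theta/s}-1=\sum_{n\ge 1}\theta^n/(n!\,s^n)$, invert each power $s^{-n}$ by the same identity, and interchange sum and integral (justified by the locally uniform convergence of the resulting power series) to obtain
\[
e^{\theta/s}-1=\int_0^\infty h_\theta(x)e^{izx}\,dx,\qquad h_\theta(x):=e^{-x/2}\sum_{n=1}^\infty\frac{\theta^n x^{n-1}}{n!(n-1)!}\mathbf{1}_{(0,\infty)}(x).
\]
The series equals $\sqrt{\theta/x}\,I_1(2\sqrt{\theta x})$ where $I_1$ is the modified Bessel function, so $h_\theta$ extends continuously to $x=0$ with $h_\theta(0^+)=\theta$, is real-analytic on $(0,\infty)$, and using $I_1(y)\sim e^y/\sqrt{2\pi y}$ satisfies $h_\theta(x)\ll e^{-\kappa x}$ as $x\to\infty$ for every $\kappa<1/2$.

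All three Fourier integrals converge absolutely on $\Im z>-1/2$, so Fubini lets me multiply them to get $e^{-\theta\psi(s)}=\int_0^\infty\Psi_\theta(x)e^{izx}\,dx$ with
\[
\Psi_\theta \;=\; G_\theta\;+\;G_\theta * h_\theta\;+\;G_\theta * \Psi_\theta^1\;+\;G_\theta * h_\theta * \Psi_\theta^1,
\]
supported on $[0,\infty)$. For the expansion at $0^+$, write $G_\theta(x)=\Gamma(\theta)^{-1}x^{\theta-1}+O(x^\theta)$ and bound each remaining convolution term by $\|f\|_{L^\infty[0,\delta]}\int_0^x y^{\theta-1}\,dy=O(x^\theta)$, using the boundedness of $h_\theta$, $\Psi_\theta^1$, and $h_\theta*\Psi_\theta^1$ near $0$. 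For the decay at infinity, each of $G_\theta$, $h_\theta$, $\Psi_\theta^1$ is $\ll e^{-\kappa x}$ for every $\kappa<1/2$, and this bound is preserved under finite convolution after slightly shrinking $\kappa$ to absorb the polynomial factor coming from $\int_0^x e^{-\kappa y}e^{-\kappa(x-y)}dy = xe^{-\kappa x}$. Continuity on $(0,\infty)$ and continuous differentiability there follow from the identity $(G_\theta*f)'(x)=G_\theta(x)f(0)+\int_0^x G_\theta(v)f'(x-v)\,dv$ whenever $f$ is $C^1$, together with the fact that $h_\theta$ is real-analytic and $\Psi_\theta^1$ is $C^1$ on $(0,\infty)$ (extractable from the proof of Proposition~\ref{prop_1} by choosing $N$ large in its remainder estimate). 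Real-valuedness follows by taking $z$ purely imaginary in the Fourier integral.

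The most delicate step will be the near-zero expansion, since the singularity $x^{\theta-1}$ of $G_\theta$ is exactly borderline for being absorbed by convolution. The key input that rescues matters is that $h_\theta(0^+)=\theta$ is finite (not a pole: the Bessel factor $I_1(2\sqrt{\theta x})$ vanishes precisely like $\sqrt{\theta x}$ to cancel the prefactor $\sqrt{\theta/x}$), and analogously $\Psi_\theta^1$ is bounded near $0$ by Proposition~\ref{prop_1}; with these in hand the estimate reduces to the elementary $\int_0^x y^{\theta-1}\,dy=x^\theta/\theta$. A secondary technicality is the justification of termwise Fourier inversion of the series for $e^{\theta/s}-1$, which I would handle via dominated convergence using the Bessel asymptotic to control the tail of the sum uniformly in horizontal strips of $\{\Im z>-1/2\}$.
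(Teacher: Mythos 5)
Your proof is correct and follows essentially the same route as the paper: isolate the factor $s^\theta e^{-\theta\psi(s)-\theta/s}-1$ handled by Proposition \ref{prop_1} and convolve its inverse transform with that of $s^{-\theta}e^{\theta/s}$, which is a modified Bessel expression. The only difference is that you split $s^{-\theta}e^{\theta/s}$ further into $s^{-\theta}\bigl(1+(e^{\theta/s}-1)\bigr)$ and rederive the Bessel-function inverse transform by termwise inversion of the exponential series, where the paper simply quotes the closed form $\Psi_{\theta,\theta}^0(x)=e^{-x/2}(x/\theta)^{(\theta-1)/2}I_{\theta-1}(2\sqrt{\theta x})$ from Oberhettinger's tables; your decomposition satisfies $G_\theta+G_\theta*h_\theta=\Psi_{\theta,\theta}^0$, so the two arguments coincide.
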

\begin{proof} Define $\Psi_{\theta,\alpha}^0(x)=0$ for $x<0$ and 
\begin{equation} \label{eq_180329_06}
\Psi_{\theta,\alpha}^0(x) = e^{-x/2}(x/\alpha)^{(\theta-1)/2} I_{\theta-1}(2\sqrt{\alpha x}\,)
\end{equation}
for $x \geq 0$, 
where $I_\nu(z)$ is the modified Bessel function of the first kind. 
Then $\Psi_{\theta,\alpha}^0(x)$ is continuously differentiable on $(0,\infty)$ and   
\begin{equation} \label{eq_180329_03}
\frac{1}{s^{\theta}}\exp\left(\frac{\alpha}{s}\right) 
= \int_{0}^{\infty} \Psi_{\theta,\alpha}^0(x)\, e^{izx} dx, \quad s=\frac{1}{2}-iz,  
\end{equation}
holds if $\Im (z)>-1/2$ and $\theta>0$ by \cite[p. 173]{Ob} 
(and the changing of variable $\log(1/x) \mapsto x$). 
The trivial equality 
\[
\exp(-\theta\psi(s)) 
= 
\frac{1}{s^\theta}\exp\left(\frac{\theta}{s}\right) 
+
\frac{1}{s^\theta}\exp\left(\frac{\theta}{s}\right) \cdot \left[ s^\theta \exp\left(-\theta\psi(s)-\frac{\theta}{s}\right)-1 \right] 
\]
implies that \eqref{eq_180323_04} holds for 
\begin{equation} \label{eq_180329_02}
\Psi_\theta(x) 
= \Psi_{\theta,\theta}^0(x) + \int_{0}^{x} \Psi_{\theta,\theta}^0(y)\Psi_\theta^1(x-y) \, dy.
\end{equation}
In fact, the integral on the right-hand side exists 
by Proposition \ref{prop_1} and 
\begin{equation} \label{eq_180329_04}
\Psi_{\theta,\theta}^0(x) = x^{\theta-1}(\Gamma(\theta)^{-1}+O(x))
\end{equation}
which is derived from the series expansion
\begin{equation} \label{eq_180326_01}
I_\nu(z) = \sum_{m=0}^{\infty}\frac{(z/2)^{2m+\nu}}{m!\Gamma(\nu+m+1)}. 
\end{equation}
Clearly, $\Psi_\theta(x)$ is continuous on $(0,\infty)$, vanishes on $(-\infty,0)$ 
and is continuously differentiable on $(0,\infty)$. 
Therefore it remains to show the upper bound for large $x>0$. 
By \eqref{eq_180329_04}, 
\[
\Psi_\theta(x) 
= \Psi_{\theta,\theta}^0(x) +O(x) = x^{\theta-1}(\Gamma(\theta)^{-1}+O(x))+O(x)
\]
as $x \to 0^+$. The asymptotic formula 
\[
I_\nu(z) = \frac{e^z}{\sqrt{2\pi z}}(1+O(|z|^{-1})), \quad |\arg z|<\pi/2, \quad |z| \to \infty
\]
derives 
\begin{equation} \label{eq_180326_02}
\Psi_{\theta,\alpha}^0(x) \ll x^{(2\theta-3)/4} 
\exp\left( -\frac{x}{2} + 2\sqrt{\theta x} \right) \ll \exp(-\kappa x)
\end{equation}
as $x \to +\infty$ for any $\kappa <1/2$. 
By Proposition \ref{prop_1}, $e^{x/2}\Psi_\theta^1(x)$ is uniformly bounded on $[0,\infty)$. 
Therefore, 
\[
\aligned 
\int_{0}^{x} & \Psi_{\theta,\theta}^0(y)\Psi_\theta^1(x-y) \, dy 
 = \int_{0}^{x} \Psi_{\theta,\theta}^0(y) e^{-(x-y)/2} \cdot e^{(x-y)/2} \Psi_\theta^1(x-y) \, dy \\
& \ll e^{-x/2} \int_{0}^{x}e^{y/2} |\Psi_{\theta,\theta}^0(y)| \, dy 
= e^{-x/2} \int_{0}^{x} e^{y/2}\Psi_{\theta,\theta}^0(y) \, dy , 
\endaligned 
\]
since $|I_\nu(z)|=I_\nu(z)$ for real $z$, $\theta$ by \eqref{eq_180326_01}. 
Using \eqref{eq_180326_01} again, 
\[
\aligned 
\int_{0}^{x} & e^{y/2} \Psi_{\theta,\theta}^0(y) \, dy
 = \int_{0}^{x} 
y^{\theta-1} \sum_{m=0}^{\infty}\frac{1}{m!\Gamma(\theta+m)}(\theta y)^{m} \, dy \\
& = \sum_{m=0}^{\infty}\frac{\theta^m }{m!\Gamma(\theta+m+1)} y^{m+\theta} 
 = (x/\theta)^{\theta/2}I_\theta(2\sqrt{\theta x}) = e^{x/2}\Psi_{\theta+1,\theta}^0(x). 
\endaligned 
\]
Hence $\int_{0}^{x} \Psi_{\theta,\theta}^0(y)\Psi_\theta^1(x-y) \, dy \ll \Psi_{\theta+1,\theta}^0(x)$ as $x \to \infty$ 
which implies the estimate $\Psi_\theta(x) \ll \exp(-\kappa x)$ by \eqref{eq_180326_02}. 
\end{proof}

{\bf Proof of Theorem \ref{thm_2} (K-ii), (K-iii)} \quad 
We have 
\begin{equation}
\exp\left(-\frac{\alpha}{s}\right) 
= \int_{0}^{\infty} J_0(2\sqrt{\alpha x})\, s \,e^{-sx} dx
\end{equation}
for $\Re(s)>0$ by \cite[p. 173]{Ob} (and the changing of variable $\log(1/x) \mapsto x$). 
Therefore, by integration by parts, 
\[
\aligned 
\exp\left(-\frac{\alpha}{s}\right) -1 
& = \int_{0}^{\infty} \frac{d}{dx} J_0(2\sqrt{\alpha x})\, e^{-sx} dx \\
& = -  \alpha \int_{0}^{\infty} \frac{J_1(2\sqrt{\alpha x})}{\sqrt{\alpha x}} e^{-sx} dx.  
\endaligned
\]
Combining this with \eqref{eq_180323_04} and the equality 
\[
\aligned 
\exp\left(-2\theta \frac{\gamma'(s)}{\gamma(s)}\right)
& =  \pi^\theta 
\exp\left( - \theta \psi(\tfrac{s}{2}+1) \right) \\
& \quad + \pi^\theta 
\exp\left( - \theta \psi(\tfrac{s}{2}+1) \right)
\left[ \exp\left( - \frac{2\theta}{s-1} \right) - 1 \right],  
\endaligned 
\]
we obtain 
\begin{equation} \label{eq_180403_02}
\aligned 
g_\theta(x) & = 2\pi^\theta e^{-\frac{3}{2}x} \Psi_{\theta}(2x) \\ 
& \quad - 4\theta \pi^\theta e^{-\frac{3}{2}x} \int_{0}^{x} \Psi_{\theta}(2(x-y))\frac{ e^{2y} J_{1}(2\sqrt{2\theta y})}{\sqrt{2\theta y}} \, dy. 
\endaligned 
\end{equation}
In particular, $g_\theta(x)$ is a real-valued continuous function on $\R$ vanishing on $(-\infty,0)$ 
by Proposition \ref{prop_2} and the assumption $\theta>1$. 
Thus formula \eqref{eq_180329_05} implies (K-iii). 
The Dirichlet series \eqref{eq_180323_02} converges absolutely for $\Re(s)>1$ (\cite[Prop. 3.9.5]{Ih} or \cite[Thm. 2]{IM01}). 
Therefore, \eqref{eq_180323_01} holds for $\Im(z)>1/2$ 
by Proposition \ref{prop_2} and the Fubini theorem. 
Moreover, we have $K_\theta(x) \ll \exp(x/2)$ 
by moving the path of integration in the inversion formula of \eqref{eq_180323_01} 
noting the growth of $\psi(s)$ and the non-vanishing of $\zeta(s)$ for $\Re(s) \geq 1$. \hfill $\qed$
\medskip

{\bf Proof of Theorem \ref{thm_2} (K-iv)} \quad 
By \eqref{eq_180329_05} and Proposition \ref{prop_2}, it remains to show that 
$|(d/dx)g_\theta(x)|$ is integrable around $x=0$.  
We have 
\[
\scalebox{0.95}{$
\aligned 
\frac{d}{dx}g_\theta(x) 
& = -3 e^{-\frac{3}{2}x} \left( 
\Psi_{\theta}(2x) - 2\theta \int_{0}^{x} \Psi_{\theta}(2(x-y))\frac{ e^{2y} J_{1}(2\sqrt{2\theta y})}{\sqrt{2\theta y}} \, dy \right) \\ 
& + 2 e^{-\frac{3}{2}x} \left( 
\frac{d}{dx} \Psi_{\theta}(2x) - 2\theta \int_{0}^{x} \frac{d}{dx} \Psi_{\theta}(2(x-y))\frac{ e^{2y} J_{1}(2\sqrt{2\theta y})}{\sqrt{2\theta y}} \, dy
\right)
\endaligned 
$}
\]
and 
\[
\aligned 
\frac{d}{dx}\Psi_\theta(x) 
&= \frac{d}{dx}\Psi_{\theta,\theta}^0(x) 
+ \int_{0}^{x} \frac{d}{dy} \Psi_{\theta,\theta}^0(y)\Psi_\theta^1(x-y) \, dy.
\endaligned 
\]
Applying the series expansion \eqref{eq_180326_01} to definition \eqref{eq_180329_06} of $\Psi_{\theta,\alpha}^0$, 
we easily find that $|(d/dx)\Psi_{\theta,\theta}^0(x)|$ is integrable around $x=0$ by the assumption $\theta>1$. 
Therefore, the above two equalities implies that $|(d/dx)g_{\theta}(x)|$ is integrable around $x=0$. \hfill $\qed$
\medskip

{\bf Proof of Theorem \ref{thm_2} (K-v)} \quad Put $s=1/2-i(u+iv)$. For $\delta>0$, 
the estimate 
\begin{equation} \label{eq_180330_02}
\exp\left(-2\theta \frac{\xi'}{\xi}(s)\right)
\ll \exp\left(-\theta \Re \psi(s/2)\right) 
\ll (1+v)^{-\theta}
\end{equation}
holds uniformly for $u \in \R$ and $v \geq 1/2+\delta$ 
with the implied constant depending only on $\delta>0$. 
On the other hand, it will be shown that 
\begin{equation} \label{eq_180330_03}
(\mathsf{F}\mathsf{K}_\theta f)(z) 
= \exp\left(-2\theta \frac{\xi'}{\xi}(s)\right) 
(\mathsf{F}f)(-z), \quad s=\frac{1}{2}-iz 
\end{equation} 
holds for $f \in L^2(-\infty,t)$ and $\Im (z)>1/2$, 
where $(\mathsf{F}f)(z)=\int_{-\infty}^{\infty}f(x)e^{izx}dx$.  
Then, (K-v) is proved by a way similar to the proof of \cite[Prop. 4.2]{Su} 
if we use \eqref{eq_180330_02} (resp. \eqref{eq_180330_03}) 
instead of (4.3) (resp. (3.3)) of \cite{Su}. 
Hence it remains to show that \eqref{eq_180330_03} holds 
for $f \in L^2(-\infty,t)$ and $\Im (z)>1/2$. 
\medskip

Let $f \in L^2(-\infty,t)$. 
Then, 
$(\mathsf{F}f)(-(u+iv)) = \int_{-\infty}^{t} f(x) e^{vx -iux} \, dx$
is defined if $v \geq 0$. On the other hand, 
$\mathsf{K}_{\theta}f$ is defined and has a support in $[-t,\infty)$ (but not necessarily $L^2(-t,\infty)$), 
since we see $(\mathsf{K}_\theta f)(x) = \int_{-x}^{t}K_\theta (x+y)f(y) \,dy$.  
Moreover, $K_\theta(x) \ll \exp(x/2)$ implies $(\mathsf{K}_{\theta}f)(x) \ll \exp(x/2)$, 
and hence  
$
(\mathsf{F}\mathsf{K}f)(u+iv)
= \int_{-t}^{\infty} (\mathsf{K}_{\theta}f)(x) e^{-vx +iux} \, dx
$
is defined if $v > 1/2$. As a consequence, the calculation 
\begin{equation*}
\aligned
({\mathsf F}\mathsf{K}_{\theta}f)(z) 
& = \int_{-\infty}^{\infty}  \int_{-\infty}^{\infty} K_{\theta}(x+y) \, e^{izx} \, dx \, f(y) \, dy \\
& =  \int_{-\infty}^{\infty} K_{\theta}(x) \, e^{izx} \, dx  \int_{-\infty}^{\infty} f(y) \, e^{-izy} \, dy \\
& = \exp\left(-2\theta \frac{\xi'}{\xi}(s)\right)  (\mathsf{F}f)(-z) 
\endaligned
\end{equation*}
is justified if $\Im (z)>1/2$. Hence we complete the proof. \hfill $\qed$

\section{Approximate formulas} 

In this section, we study approximate formulas of $\Psi_\theta(x)$ and $g_\theta(x)$ in a bounded range $0 \leq x \leq x_0$.  
This is because formulas \eqref{eq_180329_02} and \eqref{eq_180403_02} are not so explicit 
from a computational point of view, 
since $\Psi_\theta^1(x)$ is given by the inversion formula \eqref{eq_180326_03}. 

If $R(s) \ll |s|^{-N}$ as $|s| \to \infty$ in $\Re(s)>1$, 
\[
\frac{1}{2\pi}\int_{-\infty+ic}^{\infty+ic} R\left(\frac{1}{2}-iz\right) e^{-izx} dz
\ll  e^{cx_0} (\tfrac{1}{2}+c)^{1-N} 
\]
for $c>1/2$ and $0\leq x \leq x_0$, where the implied constant depends only on $R(s)$.  
Therefore, the asymptotic expansion 
\begin{equation} \label{eq_180402_02}
s^\theta \exp\left(-\theta \psi(s)-\frac{\theta}{s}\right) -1  
= \sum_{n=1}^{N-1}\frac{\tilde{C}_{n}(\theta)}{s^n} +O(|s|^{-N})
\end{equation}
for $|\arg s| \leq \pi-\delta$, $|s| \geq \delta$ 
derived from \eqref{eq_180402_01} 
implies that $\Psi_\theta^1(x)$ is well approximated by 
\begin{equation} 
\Psi_{\theta}^{1,N}(x) 
:= \mathbf{1}_{[0,\infty)}(x)e^{-x/2}\sum_{n=1}^{N-1}\frac{\tilde{C}_{n}(\theta)}{(n-1)!}x^{n-1} 
\end{equation}
in a bounded range $0\leq x \leq x_0$ if $N$ is sufficiently large, 
where $\tilde{C}_{1}(\theta)=-\theta/2$, $\tilde{C}_{2}(\theta)=\theta(3\theta+2)/24$, etc. 
More precisely, $\Psi_\theta^1(x) - \Psi_{\theta}^{1,N}(x) \ll e^{(1/2+\epsilon)x_0} (1+\epsilon)^{1-N}$ 
for $0 \leq x \leq x_0$, where $\epsilon>0$ is a given constant.  
Thus $\Psi_\theta(x)$ is well approximated by 
\begin{equation} \label{180822_01}
\Psi_{\theta}^N(x) 
:= \Psi_{\theta,\theta}^0(x) + \int_{0}^{x} \Psi_{\theta,\theta}^0(y)\Psi_{\theta}^{1,N}(x-y) \, dy
\end{equation}
in a bounded range $0\leq x \leq x_0$ if $N$ is sufficiently large, that is,  
\[
\aligned 
\Psi_{\theta}(x) - \Psi_{\theta}^N(x) 
& = \int_{0}^{x} \Psi_{\theta,\theta}^0(y)(\Psi_{\theta}^{1}(x-y)-\Psi_{\theta}^{1,N}(x-y)) \, dy \\
& \ll e^{(1/2+\epsilon)x_0} \int_{0}^{x_0} |\Psi_{\theta,\theta}^0(y)| \, dy \cdot (1+\epsilon)^{1-N}. 
\endaligned 
\]
For the integral on the right-hand side of \eqref{180822_01}, the trivial equality
\[
\frac{1}{s^\theta}\exp\left(\frac{\theta}{s}\right) \cdot 
\sum_{n=1}^{N-1}\frac{\tilde{C}_{n-1}(\theta)}{s^n}
 = \sum_{n=1}^{N-1}\tilde{C}_{n}(\theta)\frac{1}{s^{\theta+n}}\exp\left(\frac{\theta}{s}\right) \\
\]
combined with \eqref{eq_180329_03} and \eqref{eq_180402_02} gives 
\[
\int_{0}^{x} \Psi_\theta^0(y)\Psi_{\theta}^{1,N}(x-y) \, dy
 = \sum_{n=1}^{N-1} \tilde{C}_{n}(\theta) \Psi_{\theta+n,\theta}^0(x).
\]
Hence, by taking $\tilde{C}_{0}(\theta)=1$, we obtain
\[
\Psi_{\theta}^N(x) 
= 
\sum_{n=0}^{N-1}\tilde{C}_n(\theta) \Psi_{\theta+n,\theta}^0(x). 
\]
These sums are useful for calculating $\Psi_\theta(x)$ in computational ways.  

On the other hand, we note the decomposition 
\[
\aligned 
\exp &\left(-\theta\psi\left(\frac{s}{2}+1\right) - \frac{2\theta}{s-1}\right) \\
& = 
\frac{1}{w^\theta}\exp\left(\frac{\theta}{w}\right) 
\cdot 
w^\theta \exp\left(-\theta\psi(w)-\frac{\theta}{w}\right) 
\cdot 
\exp\left( - \frac{2\theta}{2w-3}\right),
\endaligned 
\]
where $w=(s+2)/2$, and the asymptotic expansion 
\[
w^\theta \exp\left(-\theta\psi(w)-\frac{\theta}{w}\right) 
\exp\left( - \frac{2\theta}{2w-3}\right) 
= 1 + \sum_{n=1}^{N-1}\frac{A_n(\theta)}{w^n} + O\left( |w|^{-N} \right)
\]
for large $w$ derived from \eqref{eq_180402_01}, 
where $A_1(\theta)=-3\theta/2$, $A_2(\theta)=\theta(27\theta-34)/24$, etc.  
Then, we find that $g_\theta(x)$ is well approximated by 
\[
g_\theta^N(x) = \Psi_{\theta,\theta}^2(x) + \sum_{n=1}^{N-1} A_n(\theta) \Psi_{\theta+n,\theta}^2(x), 
\]
with $\Psi_{\theta,\alpha}^2(x) := 2e^{-3x/2}\Psi_{\theta,\alpha}^0(2x)$  
in a bounded range $0 \leq x \leq x_0$, 
since $\int_{0}^{\infty} \Psi_{\theta,\alpha}^2(x) e^{izx} \, dx
=w^{-\theta}\exp(\alpha/w)$ with $w=(s+2)/2$ and $s=1/2-iz$  
holds by \eqref{eq_180329_03}. 
As before, ``well approximated'' means that 
$g_\theta(x)-g_\theta^N(x) \ll_{x_0, \epsilon} (1+\epsilon)^{-N}$ holds for $0 \leq x \leq x_0$. 


%

\bigskip \noindent
\\
Department of Mathematics, 
School of Science, \\
Tokyo Institute of Technology \\
2-12-1 Ookayama, Meguro-ku, 
Tokyo 152-8551, JAPAN  \\
Email: {\tt msuzuki@math.titech.ac.jp}

\end{document}